\tikzset{individu/.style={draw,thick}}
\theoremstyle{plain}
\newtheorem{theorem}{Theorem}[section]
\newtheorem{corollary}[theorem]{Corollary}
\newtheorem{lemma}[theorem]{Lemma}
\theoremstyle{definition}
\theoremstyle{remark}
\newtheorem{remark}[theorem]{Remark}
\numberwithin{equation}{section}
\newcommand{\N}{\mathbb{N}}
\newcommand{\R}{\mathbb{R}}
\DeclareMathOperator{\E}{\mathbb{E}}
\renewcommand{\P}{\mathbb{P}}
\newcommand{\egaldistr}{{\overset{(d)}{=}}}
\newcommand{\calA}{\mathcal{A}}
\newcommand \listoftodos{\section*{Todo list} \@starttoc{tdo}}
\newcommand\l@todo[2]
\noindent \textit{#2}, \parbox{10cm}{#1}\par} \makeatother
\begin{document}
\newcommand{\asn}{\quad \text{a.s.}}
\newcommand{\eee}{\mathbb{E}}
\newcommand{\ppp}{\mathbb{P}}
\newcommand{\mo}{\mathcal{O}}
\newcommand{\pp}{\mathcal{P}}
\newcommand{\ip}{\ \text{in probability}}
\newcommand{\ipn}{\text{in probability}}
\newcommand{\yw}{\widetilde{Y}}
\newcommand{\bigo}{\mathcal{O}}
\newcommand{\smallo}{\mathcal{o}}
\newcommand{\at}{\mathcal{A}_{t,\epsilon}}
\newcommand{\inlaw}{\quad in \ law}
\newcommand{\as}{\quad  a.s.}
\newcommand{\asb}{\alpha\mathrm{-stable}}
\newcommand{\rhonu}{\rho_\nu(\cdot)}
\newcommand{\sm}{\mathcal{P}_{s}}
\newcommand{\mass}{\mathcal{P}_{m}}
\newcommand{\pk}{\mathcal{P}_{k}}
\newcommand{\pn}{\mathcal{P}_{n}}
\newcommand{\pin}{\mathcal{P}_{n} \backslash 0_{[n]}}
\newcommand{\pif}{\mathcal{P}_\infty}
\newcommand{\LC}{\Lambda\text{-coalescent}}
\newcommand{\ed}{\overset{D}{=}}
\newcommand{\mbN}{\mathbb{N}}
\newcommand{\mbP}{\mathbb{P}}
\newcommand{\mcP}{\mathcal{P}}
\newcommand{\Coag}{\mathrm{Coag}}
\newcommand{\limtau}{\lim_{\tau \rightarrow 0^+}}
\newcommand{\simt}{\underset{t \rightarrow 0^+}{\sim}}
\newcommand{\simk}{\underset{k \rightarrow \infty}{\sim}}
\newcommand{\simn}{\underset{n \rightarrow \infty}{\sim}}
\newcommand{\simq}{\underset{q \rightarrow +\infty}{\sim}}
\newcommand{\di}{\displaystyle}
\newcommand{\var}{\mathrm{Var}\,}
\newcommand{\sumnk}{\sum_{n=k}^{\infty}}
\newcommand{\sumnkk}{\sum_{n=k+1}^{\infty}}
\newcommand{\limk}{\lim_{k \rightarrow \infty}}
\newcommand{\limn}{\lim_{n \rightarrow \infty}}
\newcommand{\limt}{\lim_{t \rightarrow 0^+}}
\newcommand{\lims}{\lim_{s \rightarrow 0^+}}
\newcommand{\Beta}{\mathrm{Beta}}
\title{Second order behavior of the block counting process of beta coalescents}
\author{Yier Lin\thanks{The author would like to express his sincere thank to 'Tsinghua Xue Tang Program', which provides him funds and opportunity to do research in ENS.} and Bastien Mallein\footnote{DMA, ENS.}}
\maketitle

\begin{abstract}
The Beta coalescents are stochastic processes modeling the genealogy of a population. They appear as the rescaled limits of the genealogical trees of numerous stochastic population models. In this article, we take interest in the number of blocs at small times in the Beta coalescent. Berestycki, Berestycki and Schweinsberg \cite{BBS08} proved a law of large numbers for this quantity. Recently, Limic and Talarczyk \cite{LiT15} proved that a functional central limit theorem holds as well. We give here a simple proof for an unidimensional version of this result, using a coupling between Beta coalescents and continuous-time branching processes.
\end{abstract}

\section{Introduction}
\label{sec:intro}
A coalescent process is a stochastic model for the genealogy of an infinite haploid population, built backward in time. In such a model, an individual is represented by an integer $n \in \N$. At each time $t$, we denote by $\Pi(t)$ the partition of $\N$ such that two individuals $i$ and $j$ belong to the same set in $\Pi(t)$ (that we call ``bloc'' from now on) if they share a common ancestor less than $t$ units of time in the past. In particular, we always assume that $\Pi(0) = \{ \{ 1\},\{2\}, \ldots \}$ is the partition in singletons. We construct $(\Pi(t), t \geq 0)$ as a Markov process on the set of partitions, that gets coarser over time.

Let $\Lambda$ be a probability measure on $[0,1]$. The $\Lambda$-coalescent is a coalescent process such that given there are $b$ distinct blocs in $\Pi(t)$, any particular set of $k$ blocs merge at rate
\[
  \lambda_{b,k} = \int_0^1 x^{k-2}(1-x)^{b-k} \Lambda(dx).
\]
The $\Lambda$-coalescent has been introduced independently by Pitman \cite{Pit99} and Sagitov~\cite{Sag99}. In this process, several blocs may merge at once, but at most one such coalescing event may occur at a given time.

For any $t \geq 0$, we denote by $N(t)$ the number of blocs in $\Pi(t)$. We have in particular $N(0) = +\infty$. We say that the $\Lambda$-coalescent comes down from infinity if almost surely $N(t) < +\infty$ for any $t > 0$. Pitman \cite{Pit99} proved that if $\Lambda(\{1\})=0$, either the $\Lambda$-coalescent comes down from infinity, or $N(t)= +\infty$ for any $t >0$ a.s. In the rest of the article, we always assume that $\Lambda$ has no atom at 1.

Schweinsberg \cite{Sch00} obtained a necessary and sufficient condition for the $\Lambda$-coalescent to come down from infinity, that Bertoin and Le Gall \cite{BeG06} proved equivalent to
\begin{equation}
  \label{eqn:defPsi}
  \int_1^{+\infty} \frac{dq}{\psi(q)} < +\infty, \quad \text{where } \psi(q) = \int_0^1 (e^{-qx} - 1 + q x) x^{-2} \Lambda(dx).
\end{equation}
Berestycki, Berestycki and Limic \cite{BBL10} obtained the almost sure behaviour for the number of blocs $N(t)$ as $t$ goes to 0, which they called the speed of coming down from infinity. More precisely, setting $v_\psi(t) = \inf\{ s > 0 : \int_s^{+\infty} \frac{dq}{\psi(q)} \leq t \}$, they proved that for a $\Lambda$-coalescent that comes down from infinity,
\begin{equation}
  \label{eqn:speedCDI}
  \lim_{t \to 0} \frac{N(t)}{v_\psi(t)} = 1 \quad \text{a.s.}
\end{equation}

In this article, we consider the one parameter family of coalescent processes called Beta-coalescents. For any $\alpha \in (0,2)$, we consider the $\Lambda$-coalescent such that the measure $\Lambda$ is $\Beta(2-\alpha,\alpha)$, i.e.
\[
  \Lambda(dx) = \frac{1}{\Gamma(\alpha)\Gamma(2-\alpha)} x^{1-\alpha} (1-x)^{\alpha - 1} dx.
\]
The Beta-coalescents have a number of interesting properties (see e.g. \cite{BBC+05,BBS08} and references therein). In particular, if $\alpha \in (1,2)$, it can be constructed as the genealogy of an $\alpha$-stable continuous state branching process.

We observe that thanks to \eqref{eqn:defPsi}, $\alpha \in (1,2)$ is a necessary and sufficient condition for the Beta-coalescent to come down from infinity. Moreover, \eqref{eqn:speedCDI} can be restated as
\[
  \lim_{t \to 0} t^\frac{1}{\alpha-1} N(t) = (\alpha \Gamma(\alpha))^\frac{1}{\alpha - 1} \quad \text{a.s.}
\]
The speed of coming down from infinity for the Beta coalescent can also be found in \cite{BBS08}. The main result of this article is a central limit theorem for the number of blocs, as $t \to 0$.
\begin{theorem}
\label{thm:main}
Let $\alpha \in (1,2)$ we set $(\Pi(t),t \geq 0)$ the $\Beta(2-\alpha,\alpha)$-coalescent and $N(t) = \# \Pi(t)$ the number of blocs at time $t$, we have
\[
  \lim_{t \to 0} t^{\frac{1}{\alpha(\alpha-1)}} \left(N(t) - \left(\frac{\alpha \Gamma(\alpha)}{t}\right)^{\frac{1}{\alpha -1}} \right) = - D_\alpha X \quad \text{in law,}
\]
where $D_\alpha = \left(\Gamma(\alpha)\alpha \right)^{\frac{1}{\alpha(\alpha-1)}}(\alpha-1)^{-\frac{1}{\alpha}}$, $X=\int_0^1 Y(t) dt$ and  $(Y(t), t\geq 0)$ is a Lévy process satisfying $\E(e^{-\lambda Y_t}) = e^{t \lambda^\alpha}$.
\end{theorem}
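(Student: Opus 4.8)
The plan is to treat $(N(t))$ as a pure-jump Markov process and extract its fluctuations by a linearization argument. First I would use the Poissonian construction of the $\Beta(2-\alpha,\alpha)$-coalescent: a Poisson point process on $(0,\infty)\times(0,1)$ with intensity $ds\otimes x^{-2}\Lambda(dx)$, where at an atom $(s,x)$ with $N(s^-)=n$ a $\mathrm{Binomial}(n,x)$ number of blocs merge into one, so that $N$ decreases by $(\mathrm{Bin}(n,x)-1)^+$. This yields the semimartingale decomposition $N(t)=N(t_0)-\int_{t_0}^t\phi(N(s))\,ds+M(t)$, where $\phi(n)=\sum_{k\geq 2}\binom{n}{k}\lambda_{n,k}(k-1)$ is the total rate of decrease and $M$ is the compensated (martingale) part of the jumps. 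The two inputs I need are: the drift asymptotics $\phi(n)\sim\psi(n)\simq \frac{n^\alpha}{\Gamma(\alpha+1)(\alpha-1)}$, and the fact that on the relevant scale a jump of $N$ of size $\approx s$ corresponds to $x\approx s/n$, so that the jump measure of $N$ is $\mu_n(ds)\approx c_\alpha\,n^\alpha s^{-1-\alpha}\,ds$ with $c_\alpha=\frac{1}{\Gamma(\alpha)\Gamma(2-\alpha)}$. This is exactly $n^\alpha/\kappa_\alpha$ times the Lévy measure of the standardized spectrally positive $\alpha$-stable process $Y$ with $\E(e^{-\lambda Y_t})=e^{t\lambda^\alpha}$, where $\kappa_\alpha=\frac{\alpha(\alpha-1)}{\Gamma(2-\alpha)}$. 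The coupling with the continuous-time (stable) branching process is precisely the device that realizes $N$ and this driving spectrally positive stable noise on one probability space and lets me control the error in the two approximations above.

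Next I would linearize around the deterministic speed $v(t)=(\alpha\Gamma(\alpha)/t)^{1/(\alpha-1)}$, which solves $\dot v=-\psi(v)$. Writing $\xi(t)=N(t)-v(t)$ and using $\phi(N)-\psi(v)\approx\psi'(v)\xi$ gives
\[
  d\xi(t)\approx-\psi'(v(t))\,\xi(t)\,dt+dM(t),\qquad \psi'(v(t))=\frac{\alpha}{(\alpha-1)\,t}.
\]
The coefficient being exactly $\beta/t$ with $\beta=\frac{\alpha}{\alpha-1}$ is the key simplification: the integrating factor is $t^{\beta}$, and since the downward jumps make $M$ locally $\approx-\gamma\,v(s)\,dY_s$ with $\gamma=(c_\alpha/\kappa_\alpha)^{1/\alpha}$, I get
\[
  t^{\beta}\xi(t)=\int_{t_0}^t s^{\beta}\,dM(s)+\Big(\tfrac{t_0}{t}\Big)^{\beta}t_0^{\beta}\xi(t_0)\approx-\gamma\,(\alpha\Gamma(\alpha))^{1/(\alpha-1)}\int_0^t s\,dY_s,
\]
where I used the clean algebraic identity $s^{\beta}v(s)=(\alpha\Gamma(\alpha))^{1/(\alpha-1)}\,s$ and checked that the initial term carries a prefactor $t_0^{(\alpha+1)/\alpha}\to0$, so the lower limit may be sent to $0$.

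Finally I would apply the self-similarity $Y_{tu}\egaldistr t^{1/\alpha}Y_u$: substituting $s=tu$ shows $t^{1/(\alpha(\alpha-1))}\xi(t)\egaldistr-\gamma\,(\alpha\Gamma(\alpha))^{1/(\alpha-1)}\int_0^1 u\,dY_u$, the net power of $t$ being exactly $0$ (this is the computation that pins down the exponent $\tfrac1{\alpha(\alpha-1)}$). An integration by parts combined with time reversal of $Y$ on $[0,1]$ gives $\int_0^1 u\,dY_u=Y_1-\int_0^1 Y_u\,du\egaldistr\int_0^1 Y_u\,du=X$, and the constant simplifies, since $\gamma\,(\alpha\Gamma(\alpha))^{1/(\alpha-1)}=(\alpha\Gamma(\alpha))^{1/(\alpha(\alpha-1))}(\alpha-1)^{-1/\alpha}=D_\alpha$, yielding the limit $-D_\alpha X$ with the stated constant.

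The main obstacle is making the middle step rigorous: I must prove a one-dimensional stable limit theorem for the martingale $\int_{t_0}^t s^{\beta}\,dM(s)$ while simultaneously replacing the state-dependent drift $\phi(N)$ by $\psi(v)$ and the true jump measure $\mu_{N(s)}$ by its stable approximation $c_\alpha v(s)^\alpha s^{-1-\alpha}ds$, \emph{uniformly} over a window reaching down to $t_0\to0$ where $N$ is enormous and the jump activity diverges. Concretely, one has to (i) bound the difference $\phi(n)-\psi(n)$ and the nonlinear remainder $\phi(N)-\psi(v)-\psi'(v)\xi$ so that they are negligible after multiplication by $s^{\beta}$ and integration, and (ii) show the compensated jumps satisfy the hypotheses of a stable functional limit theorem despite being truncated at the macroscopic scale $s\approx N$. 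This is exactly where the coupling with the branching process does the work, since it compares $N$ directly to a process whose fluctuations are a genuine integrated stable process, so that the error control reduces to estimating the discrepancy between $N$ and its branching counterpart rather than analysing the stable convergence from scratch.
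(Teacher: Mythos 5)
Your heuristic derivation is essentially sound: the linearization coefficient $\psi'(v(t))=\frac{\alpha}{(\alpha-1)t}$, the identity $s^{\alpha/(\alpha-1)}v(s)=(\alpha\Gamma(\alpha))^{1/(\alpha-1)}s$, the scaling computation that pins down the exponent $\frac{1}{\alpha(\alpha-1)}$, the time-reversal step $\int_0^1 u\,dY_u \egaldistr \int_0^1 Y_u\,du$, and the final constant $D_\alpha$ are all correct, and this route (semimartingale decomposition of $N$ plus a stable martingale limit theorem) is in spirit the one carried out in \cite{LiT15}. However, there is a genuine gap exactly at the point you flag as ``the main obstacle,'' and the device you invoke to close it cannot do so. The coupling with the branching process from \cite{BBS08} (Lemma \ref{lem:coupling} here) is the exact distributional identity $N(t) \egaldistr D(R^{-1}(t))$, where $D(r)$ is a \emph{Poisson} random variable with parameter $((\alpha-1)r)^{-1/(\alpha-1)}$ and $R^{-1}$ is the generalized inverse of the additive functional $R(t)=C_\alpha\int_0^t Z_1(s)^{1-\alpha}\,ds$ of the CSBP. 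It is not a pathwise realization of $N$ together with a driving spectrally positive stable noise, and it yields no control whatsoever on your martingale $M$, on the drift error $\phi(n)-\psi(n)$, or on the nonlinear remainder $\phi(N)-\psi(v)-\psi'(v)\xi$: none of these objects are even visible on the CSBP side of the identity. So your steps (i) and (ii) --- the uniform drift and jump-measure approximations on a window reaching down to $t_0\to 0$, and the stable limit theorem for $\int_{t_0}^t s^{\beta}\,dM(s)$ --- would have to be proven from scratch, which is precisely the hard analytic content of \cite{LiT15}, not something the coupling hands you.

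Note also that the way the coupling is actually used is incompatible with your decomposition: it replaces it entirely. The paper writes $N(t)\egaldistr D(R^{-1}(t))$, shows by elementary Poisson concentration that $D(r)-((\alpha-1)r)^{-1/(\alpha-1)}$ is $o(r^{-1/(\alpha(\alpha-1))})$ a.s.\ (Theorem \ref{thm:poisson}, using $\alpha<2$ so that $\tfrac{1}{2(\alpha-1)}<\tfrac{1}{\alpha(\alpha-1)}$), and then extracts the entire stable fluctuation from the time change alone: by the Lamperti transform, $R(t)=C_\alpha\int_0^{U(t)}Y(u)^{-\alpha}\,du$ for a stable Lévy process $Y$ started at $1$, a Taylor expansion around $Y\approx 1$ gives $R(t)-C_\alpha t \approx C_\alpha(1-\alpha)\int_0^t Y_0(s)\,ds$, and the exact scaling property of $Y_0$ turns this into convergence in law (Theorem \ref{thm:asymptoticR}), after which one only inverts and composes (Corollary \ref{cor:Rm1}, Lemma \ref{lem:estimateRpower}). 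Where your plan requires a martingale CLT with delicate uniform error bounds, this route needs only a Taylor expansion and a scaling identity. To repair your proposal you must either supply the stable limit theorem for the compensated jumps yourself, or abandon the semimartingale decomposition and work with $D\circ R^{-1}$ directly as the paper does.
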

Note that a more precise functional central limit theorem has been obtained by \cite{LiT15} for any $\Lambda$-coalescent with a regularly varying density in a neighbourhood of $0$. However, our proof follows from simple coupling arguments, that might be of independent interest.

\begin{remark}
We observe that the random variable $X$ defined in Theorem \ref{thm:main} is an $\alpha$-stable random variable, that satisfies
\[
  \E(e^{-\lambda X}) = \exp\left( \frac{\lambda^\alpha}{\alpha + 1} \right).
\]
\end{remark}

In Section \ref{sec:csbp}, we use \cite{BBS08} to couple the Beta-coalescent with a stable continuous state branching process, and link the small times behaviour of the number of blocs with the small times behaviour of the continuous-state branching process. In Section \ref{sec:lamperti}, we use the so-called Lamperti transform to transfer the computations into the small times asymptotic of an $\alpha$-stable Lévy process, and use scaling properties to conclude.

\section{Continuous state branching process}
\label{sec:csbp}
A continous-state branching process (or CSBP for short) is a càdlàg (right-continuous with left limits at each point) Markov process $(Z(t), t \geq 0)$ on $\R_+$ that satisfies the so-called branching property: For any $x, y \geq 0$, if $(Z_x(t), t \geq 0)$ and $(Z_y(t), t \geq 0)$ are two independent versions of $Z$ starting from $x$ and $y$ respectively, then the process $(Z_x(t) + Z_y(t), t \geq 0)$ is also a version of $Z$ starting from $x+y$.

The study of CSBP started with the seminal work of \cite{Jir58}. As observed in \cite{Lam67,Sil67}, there exists a deep connexion between CSBP and Lévy processes. In effect, we observe that for any $x,t, \lambda \geq 0$, the Laplace transform of the CSBP $Z$ satisfies
\[
  \E\left( \exp(-\lambda Z_x(t) \right) = \exp(- x u_t(\lambda)),
\]
where $u$ is the solution of the following differential equation
\begin{equation}
  \label{eqn:csbpLevy}
  \partial_t u_t(\lambda) = \phi(u_t(\lambda)), \quad \text{with } u_0(\lambda) = \lambda,
\end{equation}
and $\phi$ is the Lévy-Khinchine exponent of a spectrally positive Lévy process (i.e. a Lévy process with no negative jump). The function $\phi$ is called the branching mechanism of the CSBP. If $\phi : \lambda \mapsto \lambda^\alpha$ with $\alpha \in (1,2)$, we call $Z$ the $\alpha$-stable CSBP.

Let $\alpha \in (1,2)$. Berestycki, Berestycki and Schweinsberg gave in \cite{BBS08} a coupling between the $\alpha$-stable CSBP and the $\Beta(2-\alpha,\alpha)$-coalescent, that we recall here. Let $(Z_a(t), t \geq 0, a \in [0,1])$ be a random field, càdlàg in $t$ and $a$, such that for any  $a < b$, the process $(Z_b(t)-Z_a(t), t \geq 0)$ is the $\alpha$-stable CSBP starting from $b-a$, and is independent with $(Z_c(t), t \geq 0, c < a)$. For any $t > 0$, the function $a \mapsto Z_a(t)$ is a.s. increasing, and we set
\begin{equation}
  \label{eqn:defD}
  D(t) = \# \left\{ a \in (0,1) :  Z_{a-}(t) < Z_a(t) \right\}
\end{equation}
the number of atoms in the measure $\mu_t$ satisfying $\mu_t([0,a]) = Z_a(t)$ a.s.

We also introduce $R(t) = C_\alpha \int_0^t Z_1(s)^{1-\alpha} dt$, where $C_\alpha = \alpha(\alpha-1)\Gamma(\alpha)$, as well as its generalized inverse
\begin{equation}
  \label{eqn:defRm1}
  R^{-1}(t) = \inf\left\{ s \geq 0 : R(s) > t \right\}.
\end{equation}
The coupling between the CSBP and the Beta-coalescent is obtained as a straightforward combination of Lemmas 2.1 and 2.2 in \cite{BBS08}.
\begin{lemma}[\cite{BBS08}]
\label{lem:coupling}
For any $t > 0$, we have $N(t) \egaldistr D(R^{-1}(t))$.
\end{lemma}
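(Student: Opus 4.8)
The statement is quoted from \cite{BBS08}, where it is obtained by combining Lemmas 2.1 and 2.2; I sketch here how I would reconstruct it. The plan is to first exhibit $(D(t),t\ge 0)$ as the block counting process of a coalescent naturally embedded in the flow $(Z_a(t))$, whose merger rates turn out to be those of the $\Beta(2-\alpha,\alpha)$-coalescent multiplied by the time-dependent factor $C_\alpha Z_1(t)^{1-\alpha}$, and then to remove this factor by the random time change $R$.

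First I would identify the atoms of $\mu_t$ with the current ``families''. For each fixed $t>0$ the spatial process $a\mapsto Z_a(t)$ has stationary independent increments, by the branching property and the independence built into the field, hence it is a subordinator; since the flow comes down from infinity exactly when $\alpha\in(1,2)$, it has finitely many jumps on $(0,1)$ and $\mu_t$ is purely atomic with $D(t)<\infty$ atoms. A jump of the total mass process $Z_1$ of size $\Delta$ occurring when $Z_1=z$ corresponds to a single reproduction event creating a family carrying a fraction $x=\Delta/(z+\Delta)$ of the new total mass; read backwards in time this is precisely a $\Lambda$-coalescent event in which each block joins the merger independently with probability $x$. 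Since the stable branching mechanism $\phi(\lambda)=\lambda^\alpha$ has Lévy measure $\pi(d\Delta)=\frac{\alpha(\alpha-1)}{\Gamma(2-\alpha)}\Delta^{-1-\alpha}d\Delta$ and a CSBP of mass $z$ produces such jumps at rate $z\,\pi(d\Delta)$, the instantaneous rate of a fixed $k$-merger among $b$ blocks is
\[
  z\int_0^\infty \Big(\tfrac{\Delta}{z+\Delta}\Big)^{k}\Big(\tfrac{z}{z+\Delta}\Big)^{b-k}\pi(d\Delta) = C_\alpha\, z^{1-\alpha}\,\lambda_{b,k}.
\]
The change of variables $\Delta=zx/(1-x)$ turns the integrand into $c_\alpha z^{1-\alpha}x^{k-1-\alpha}(1-x)^{b-k+\alpha-1}$ with $c_\alpha=\alpha(\alpha-1)/\Gamma(2-\alpha)$, which is exactly $C_\alpha z^{1-\alpha}$ times the $\Beta(2-\alpha,\alpha)$ integrand; this is precisely what fixes both the Beta density and the constant $C_\alpha=\alpha(\alpha-1)\Gamma(\alpha)$, and is the content of \cite{BBS08}, Lemma 2.1.

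Consequently, conditionally on the mass path $(Z_1(s))$, I would treat $(D(s),s\ge 0)$ as a time-inhomogeneous Markov chain whose jump rates at time $s$ are $C_\alpha Z_1(s)^{1-\alpha}$ times the time-homogeneous Beta-coalescent rates $\lambda_{b,k}$. The random clock $R(t)=C_\alpha\int_0^t Z_1(s)^{1-\alpha}\,ds$ absorbs exactly this factor: it is a.s.\ continuous and strictly increasing, so that $R^{-1}$ as defined in \eqref{eqn:defRm1} is a genuine inverse, and the time-changed chain $(D(R^{-1}(t)),t\ge 0)$ then has the homogeneous rates $\lambda_{b,k}$. It therefore has the law of the block counting process of the $\Beta(2-\alpha,\alpha)$-coalescent started from infinity, which is \cite{BBS08}, Lemma 2.2; evaluating at a fixed $t$ yields $N(t)\egaldistr D(R^{-1}(t))$.

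The main obstacle is not the rate bookkeeping above but the rigorous construction of the embedded coalescent inside the Bertoin--Le Gall flow \cite{BeG06}: one must check that the labelled partition induced on the ancestors in $[0,1]$ is genuinely exchangeable and Markovian, that its mergers are produced by the jumps of $Z_1$ with the independence structure asserted, and that the conditional rates are as computed. This is exactly where the work of \cite{BBS08} lies; granting it, the time-change step and the final distributional identity are straightforward.
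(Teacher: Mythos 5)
The paper gives no proof of this lemma: it is imported from \cite{BBS08} with the one-line remark that it is a straightforward combination of Lemmas 2.1 and 2.2 there. Your proposal ultimately does the same thing --- it delegates the rigorous construction to \cite{BBS08} --- but adds a reconstruction of the mechanism behind those lemmas, and that reconstruction follows the cited source's route (genealogy of the $\alpha$-stable CSBP plus time change by $R$), not a genuinely different one. Your rate bookkeeping is correct: the change of variables $\Delta = zx/(1-x)$ applied to the stable L\'evy measure $\pi(d\Delta)=\frac{\alpha(\alpha-1)}{\Gamma(2-\alpha)}\Delta^{-1-\alpha}d\Delta$ does yield $C_\alpha z^{1-\alpha}\lambda_{b,k}$ with exactly the constant $C_\alpha=\alpha(\alpha-1)\Gamma(\alpha)$ used in the paper. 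Two phrases are imprecise, though. First, conditionally on the \emph{whole path} of $Z_1$, the process $(D(s))$ is not a chain with jump rates $C_\alpha Z_1(s)^{1-\alpha}\lambda_{b,k}$: given the path, the merger events sit exactly at the jump times of $Z_1$ (each block joining independently with probability $x=\Delta/(z+\Delta)$), so the conditional law has no absolutely continuous compensator. The correct formulation is that the pair $(D,Z_1)$ is jointly Markov and that, given only the \emph{current} mass $z$, $k$-mergers among $b$ blocks occur at rate $C_\alpha z^{1-\alpha}\lambda_{b,k}$; the time change is then applied to the joint process, after which the $D$-component becomes autonomous with homogeneous rates $\lambda_{b,k}$. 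Second, your argument that $a\mapsto Z_a(t)$ has finitely many jumps ``since the flow comes down from infinity'' is circular as stated; the clean argument --- which is what underlies Lemma 2.2 of \cite{BBS08} and is used in Theorem \ref{thm:poisson} of this paper --- is that the Laplace exponent $u_t$ of this subordinator satisfies $u_t(\infty)=((\alpha-1)t)^{-\frac{1}{\alpha-1}}<\infty$, so it is compound Poisson and $D(t)$ is Poisson with that parameter. Neither slip is fatal, since you explicitly flag that the rigorous construction (exchangeability, Markov property, entrance from infinity) is the content of \cite{BBS08}, which is precisely where the paper leaves it as well.
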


Using this result, to compute the small times behaviour of $N(t)$, it is enough to study the asymptotic behaviour of $D(r)$ and $R^{-1}(t)$ separately. We first provide a straightforward estimate on the asymptotic behaviour of $D$.
\begin{theorem}\label{thm:poisson}
For any $\alpha \in (1,2)$, for any $\epsilon>0$, we have
\[
  \lim_{r \to 0} \frac{D(r) - ((\alpha-1)r)^{-\frac{1}{\alpha -1}}}{r^{-\frac{1}{2(\alpha-1)}-\epsilon}} = 0 \quad \text{a.s.}
\]
\end{theorem}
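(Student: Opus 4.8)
The plan is to identify the exact law of $D(t)$ for each fixed $t>0$, to establish that $t \mapsto D(t)$ is non-increasing, and then to run a Poisson concentration estimate together with a Borel--Cantelli argument along a judiciously chosen sequence $r_n \to 0$, interpolating between the $r_n$ by monotonicity.

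First I would observe that, for fixed $t>0$, the map $a \mapsto Z_a(t)$ is a subordinator on $[0,1]$: its increments over disjoint $a$-intervals are independent and stationary (the law of $Z_b(t)-Z_a(t)$ depends only on $b-a$), and nonnegative. Its Laplace exponent is precisely $u_t$, since $\E(e^{-\lambda(Z_b(t)-Z_a(t))}) = e^{-(b-a)u_t(\lambda)}$. Integrating \eqref{eqn:csbpLevy} for the $\alpha$-stable branching mechanism $\phi(\lambda)=\lambda^\alpha$ gives $u_t(\lambda) = \left(\lambda^{1-\alpha}+(\alpha-1)t\right)^{-1/(\alpha-1)}$, so that $\lambda_t := \lim_{\lambda \to \infty} u_t(\lambda) = ((\alpha-1)t)^{-1/(\alpha-1)} < \infty$ while $u_t(\lambda)/\lambda \to 0$. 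Hence this subordinator has no drift and a finite Lévy measure of total mass $\lambda_t$, and therefore the number of its jumps on $(0,1)$, which is exactly $D(t)$, satisfies $D(t) \egaldistr \mathrm{Poisson}(\lambda_t)$. Note that $\lambda_t$ is exactly the leading term $((\alpha-1)t)^{-1/(\alpha-1)}$ appearing in the statement.

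Next I would record the monotonicity: for $t>0$ the measure $\mu_t$ is purely atomic, each atom sits at a fixed label $a\in(0,1)$ and carries a mass evolving as an $\alpha$-stable CSBP, so atoms can only vanish (when this mass hits $0$) and never split. Thus $t \mapsto D(t)$ is a.s. non-increasing on $(0,\infty)$, while $r \mapsto \lambda_r$ is continuous and decreasing, and it suffices to control $D$ along a sequence. Assuming without loss of generality that $\epsilon$ is small, I would use the Poisson tail bound $\P(|P-\lambda|\ge x)\le 2\exp(-x^2/(2(\lambda+x)))$ and choose $r_n = n^{-(\alpha-1)}$, for which the mean linearises: $\lambda_{r_n} = (\alpha-1)^{-1/(\alpha-1)} n$. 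Taking the threshold $x_n = r_n^{-1/(2(\alpha-1))-\epsilon/2} = n^{1/2+(\alpha-1)\epsilon/2}$, the exponent $x_n^2/\lambda_{r_n}$ is of order $n^{(\alpha-1)\epsilon}$, so $\sum_n \P(|D(r_n)-\lambda_{r_n}|\ge x_n)<\infty$ and Borel--Cantelli gives $|D(r_n)-\lambda_{r_n}|\le x_n$ for all large $n$, almost surely.

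Finally, for $r\in[r_{n+1},r_n]$ monotonicity of $D$ and of $\lambda$ yields $|D(r)-\lambda_r| \le (\lambda_{r_{n+1}}-\lambda_{r_n}) + \max(x_n,x_{n+1})$. The key payoff of the choice $r_n=n^{-(\alpha-1)}$ is that the deterministic gap $\lambda_{r_{n+1}}-\lambda_{r_n} = (\alpha-1)^{-1/(\alpha-1)}$ is a constant, hence negligible against the normalisation $r^{-1/(2(\alpha-1))-\epsilon}\ge n^{1/2+(\alpha-1)\epsilon}$; likewise $x_{n+1}/n^{1/2+(\alpha-1)\epsilon}$ is of order $n^{-(\alpha-1)\epsilon/2}\to 0$. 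Dividing through shows the ratio tends to $0$ almost surely, which is the claim. The one genuinely delicate point is matching the Borel--Cantelli grid to the fluctuation scale: the grid must be dense enough that the monotone interpolation error $\lambda_{r_{n+1}}-\lambda_{r_n}$ stays below the $\sqrt{\lambda_r}$ scale, yet sparse enough for summability; the linearising choice $r_n=n^{-(\alpha-1)}$ resolves this cleanly, and the identification $D(t)\egaldistr \mathrm{Poisson}(\lambda_t)$ together with the monotonicity of $D$ are the structural facts that make the almost-sure (rather than merely in-probability) statement reachable.
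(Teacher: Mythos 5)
Your proof is correct and follows essentially the same route as the paper: identify $D(r)$ as a Poisson random variable with parameter $((\alpha-1)r)^{-\frac{1}{\alpha-1}}$, use the monotonicity of $r \mapsto D(r)$, and combine a Poisson concentration bound with Borel--Cantelli along a grid plus monotone interpolation (your choice $r_n = n^{-(\alpha-1)}$ is precisely the paper's ``deterministic change of variables'' to a unit-rate Poisson parameter followed by an integer grid). The only cosmetic differences are that you re-derive the Poisson law from the Laplace exponent of the subordinator $a \mapsto Z_a(t)$ where the paper simply cites Lemma~2.2 of \cite{BBS08}, and that you invoke a ready-made Bernstein-type tail bound where the paper carries out the exponential Markov computation by hand.
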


\begin{proof}
We note that $(D(r), r > 0)$ is decreasing. Moreover, for any $r \geq 0$, $D(r)$ is a Poisson random variable with parameter $\theta_r = ((\alpha-1)r)^{-\frac{1}{\alpha-1}}$, by Lemma~2.2 of~\cite{BBS08}. Therefore, by a deterministic change of variables, it is enough to observe that for any increasing process $(P(t), t \geq 0)$ such that $P(t)$ is a Poisson random variable with parameter $t$, we have
\[
  \lim_{t \to +\infty} \frac{P(t)-t}{t^{\frac{1}{2}+\epsilon}} = 0 \quad \text{a.s.}
\]

Using the exponential Markov inequality, for any $\lambda > 0$ we have
\[
  \P(P(t) - t > t^{\frac{1}{2} + \epsilon}) \leq e^{-\lambda t^{\frac{1}{2}+\epsilon}} \E\left( e^{\lambda(P(t)-t)} \right) = \exp\left( t (e^{\lambda}-1-\lambda) - \lambda t^{\frac{1}{2} + \epsilon} \right).
\]
Applying this inequality with $\lambda = t^{-1/2}$, there exists $C_\epsilon>0$ such that for any $t \geq 1$, $\P(P(t) - t > t^{\frac{1}{2} + \epsilon}) \leq C_\epsilon e^{-t^\epsilon}$. With similar computations, we have
\[
  \P(P(t) - t < - t^{\frac{1}{2} + \epsilon}) \leq C_\epsilon e^{-t^{\epsilon}}.
\]
We apply the Borel-Cantelli lemma, yielding $\limsup_{n \to + \infty} \frac{|P(n)-n|}{n^{\frac{1}{2}+\epsilon}} \leq 1$ a.s. As $P$ is increasing, we obtain that for any $\epsilon>0$, $\lim_{t \to +\infty} \frac{P(t)-t}{t^{\frac{1}{2}+\epsilon}} = 0$ a.s. concluding the proof.
\end{proof}

\section{The Lamperti transform}
\label{sec:lamperti}
The connexion between CSBP and spectrally positive Lévy processes observed in \eqref{eqn:csbpLevy} can be strengthen. In \cite{Lam67}, Lamperti observed that a CSBP with branching mechanism $\phi$ could be constructed as a random time change of a Lévy process with Lévy-Khinchine exponent $\phi$. A proof of this result can be found in \cite{CLUB}. More precisely, let $(Y(t), t \geq 0)$ be a spectrally positive Lévy process starting from $a$, such that $\E(e^{-\lambda Y(t)}) = e^{-a\lambda+t \phi(\lambda)}$. We set $T = \inf\{ s \geq 0 : Y(s) \leq 0 \}$ and
\[
  U(t) = \inf\left\{ s \geq 0 : \int_0^s \frac{dr}{Y(r \wedge T)} > t \right\}.
\]
The Lamperti transform states that for $Z$ a CSBP with branching mechanism $\phi$ such that $Z(0)=a$, we have
\begin{equation}
  \label{eqn:lampertiTransform}
  \left( Z(t), t \geq 0 \right) \egaldistr \left( Y(U(t)), t \geq 0 \right)
\end{equation}

In the rest of the section, we denote by $(Y(t), t \geq 0)$ a Lévy process with Lévy-Khinchine exponent $\phi(\lambda) = \lambda^\alpha$ such that $Y(0)=1$ a.s. We also set $Y_0(t) = Y(t)-1$. We write $T = \inf\left\{ s \geq 0 : Y(s) \leq 0 \right\}$ and
\[
  U(t) = \inf\left\{ s \geq 0 : \int_0^s \frac{du}{Y(u \wedge T)} \geq t \right\}.
\] 
Using \eqref{eqn:lampertiTransform}, the process defined in \eqref{eqn:defRm1} satisfies
\begin{equation}
  \label{eqn:applLamperti}
  \left(R^{-1}(t), t \geq 0 \right) \egaldistr \left( \inf\left\{ s \geq 0 : C_\alpha\int_0^s Y(U(u))^{1-\alpha} du \geq t\right\}, t \geq 0 \right).
\end{equation}
Therefore, up to a slight abuse of notation, we write
\begin{equation}
  \label{eqn:defR}
  R(t) = C_\alpha\int_0^t Y(U(s))^{1-\alpha} ds = C_\alpha \int_0^{U(t)} Y(u)^{-\alpha} du,
\end{equation}
by change of variable, and again $R^{-1}(t) = \inf\left\{ s \geq 0 : R(s) \geq t \right\}$. We first prove a central limit theorem for the asymptotic behaviour of $R(t)$ as $t \to 0$.
\begin{theorem}
\label{thm:asymptoticR}
We denote by $X = \int_0^1 Y_0(s) ds$. We have
\[
  \lim_{t \to 0} \frac{R(t) - C_\alpha t}{t^{1 + \frac{1}{\alpha}}} = (1-\alpha)C_\alpha X \quad \text{in law}.
\]
\end{theorem}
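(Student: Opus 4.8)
The plan is to exploit the second representation of $R$ given in \eqref{eqn:defR}, namely $R(t) = C_\alpha\int_0^{U(t)} Y(u)^{-\alpha}\,du$, together with the defining identity of the Lamperti time change. Indeed, as long as $U(t) < T$, the map $s \mapsto \int_0^s Y(u\wedge T)^{-1}\,du$ is continuous and strictly increasing near $0$, so that $t = \int_0^{U(t)} Y(u)^{-1}\,du$. Writing $Y(u) = 1 + Y_0(u)$ and subtracting $C_\alpha$ times this identity, I would obtain the exact formula
\[
  R(t) - C_\alpha t = C_\alpha \int_0^{U(t)} \left[ (1+Y_0(u))^{-\alpha} - (1+Y_0(u))^{-1} \right] du.
\]
Since $(1+y)^{-\alpha} - (1+y)^{-1} = (1-\alpha) y + O(y^2)$ as $y \to 0$, the first-order term is $C_\alpha(1-\alpha)\int_0^{U(t)} Y_0(u)\,du$, which, after replacing the upper limit $U(t)$ by $t$, is $C_\alpha(1-\alpha)\int_0^t Y_0(u)\,du$. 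The whole proof then reduces to showing that this is the dominant contribution and that $t^{-(1+1/\alpha)}\int_0^t Y_0(u)\,du$ converges in law, the latter being immediate by self-similarity.

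The main obstacle I expect is to control the random time change $U$ for small $t$, so as both to justify the identity above and to bound the error terms. First I would fix an a priori estimate: since $Y$ is càdlàg with $Y(0)=1$, one has $T>0$ almost surely and $\sup_{u \le 2t}|Y_0(u)| \to 0$ as $t \to 0$; hence, for $t$ small enough, the event $A_t = \{\sup_{u\le 2t}|Y_0(u)| \le 1/2\}\cap\{2t < T\}$ holds, with $\P(A_t)\to 1$. On $A_t$ the integrand $Y(u)^{-1}$ lies in $[2/3,2]$ over $[0,2t]$, and comparing $U^{-1}(2t) = \int_0^{2t} Y(u)^{-1}\,du \ge \tfrac{4}{3}t > t$ with $U^{-1}(U(t))=t$ yields $U(t) \le 2t$. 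This is the key point: it simultaneously guarantees $U(t)<T$ (so that the $\wedge T$ in the definition of $U$ is inactive and the identity for $t$ is legitimate) and confines all the integrals appearing below to the interval $[0,2t]$, on which $Y_0$ is uniformly small.

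With this control in hand, the remaining steps are routine estimates combined with the scaling property $(Y_0(ct))_{t\ge 0} \egaldistr (c^{1/\alpha} Y_0(t))_{t \ge 0}$, valid because $Y_0$ is the strictly $\alpha$-stable Lévy process with exponent $\lambda^\alpha$. Scaling gives $\sup_{u\le 2t}|Y_0(u)|$ of order $t^{1/\alpha}$, $\int_0^{2t} Y_0(u)^2\,du$ of order $t^{1+2/\alpha}$, and, via $U(t)-t = \int_0^{U(t)} \tfrac{Y_0(u)}{1+Y_0(u)}\,du$, the bound $|U(t)-t|$ of order $t^{1+1/\alpha}$, all in probability. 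Consequently the quadratic remainder $C_\alpha\int_0^{U(t)} O(Y_0(u)^2)\,du$ and the boundary term $C_\alpha(1-\alpha)\int_t^{U(t)} Y_0(u)\,du$, the latter bounded by $|U(t)-t|\,\sup_{u\le 2t}|Y_0(u)|$, are both of order $t^{1+2/\alpha}$, hence negligible after division by $t^{1+1/\alpha}$. Finally, the substitution $u = tv$ and scaling give the exact identity in law $t^{-(1+1/\alpha)}\int_0^t Y_0(u)\,du \egaldistr \int_0^1 Y_0(v)\,dv = X$ for every $t>0$; Slutsky's lemma then upgrades the decomposition to $t^{-(1+1/\alpha)}(R(t)-C_\alpha t) \to (1-\alpha)C_\alpha X$ in law.
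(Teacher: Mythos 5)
Your proof is correct and follows essentially the same route as the paper: the same exact identity $R(t)-C_\alpha t = C_\alpha\int_0^{U(t)}\bigl((1+Y_0(u))^{-\alpha}-(1+Y_0(u))^{-1}\bigr)\,du$, the same first-order Taylor decomposition with a quadratic remainder, and the same scaling argument identifying the limit law of $t^{-(1+1/\alpha)}\int_0^t Y_0(u)\,du$ as $X$. The only differences are technical: you control the time change via the soft event $A_t$ (whose probability tends to $1$ by right-continuity) and bound the boundary term by the product $|U(t)-t|\,\sup_{u\le 2t}|Y_0(u)|$, whereas the paper uses the event $\mathcal{A}_{t,\epsilon}$ with the estimate $\P(\mathcal{A}_{t,\epsilon}^c)\le Ct\epsilon^{-\alpha}$ and a two-step limit ($t\to 0$, then $\delta\to 0$); both give the same conclusion.
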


\begin{proof}
For any $\epsilon>0$ and $t > 0$, we write $\calA_{t,\epsilon} = \{ |Y(s)-1| \leq \epsilon, s \leq 2t \}$ the event such that $Y$ stays in an $\epsilon$ neighbourhood of 1 until time $2t$. As observed in \cite[Lemma 4.2]{BBS08}, there exists $C>0$ such that $\P(\calA_{t,\epsilon}^c) \leq C t \epsilon^{-\alpha}$.

We first prove that $\lim_{t \to 0} \frac{U(t)}{t} = 1$ and $\lim_{t \to 0} \frac{R(t)}{t} = C_\alpha$ a.s. Let $\epsilon<1/2$, observe that on the event $\calA_{t,\epsilon}$, we have $T > 2t$, therefore for any $s \leq t$, we have
\[
  U(s) = \inf\left\{ r \geq 0 : \int_0^r \frac{du}{Y(u)} \geq s \right\} \in \left[ \tfrac{s}{1+\epsilon}, \tfrac{s}{1-\epsilon}\right].
\]
In particular, letting $t \to 0$ we obtain
\[
  \frac{1}{1+\epsilon} \leq \liminf_{s \to 0} \frac{U(s)}{s} \leq \limsup_{s \to 0} \frac{U(s)}{s} \leq \frac{1}{1-\epsilon} \quad \text{a.s.}
\]
Letting $\epsilon \to 0$, this yields $\lim_{t \to 0} \frac{U(s)}{s} = 1$ a.s. Similarly, by \eqref{eqn:defR} we have
\[
  \frac{1}{(1+\epsilon)^{1+\alpha}} \leq \liminf_{s \to 0} \frac{R(s)}{C_\alpha s} \leq \limsup_{s \to 0} \frac{R(s)}{C_\alpha s} \leq \frac{1}{(1-\epsilon)^{1+\alpha}},
\]
yielding $\lim_{t \to 0} \frac{R(t)}{t} = C_\alpha$ a.s.

We set $\tilde{R}(t) = R(t) - C_\alpha t$, we have
\[
  \tilde{R}(t) = C_\alpha \int_0^{U(t)} \left(Y(s)^{-\alpha} - \frac{1}{Y(s)}\right) ds  = C_\alpha \int_0^{U(t)} \frac{(1 + Y_0(s))^{1-\alpha} - 1}{1+Y_0(s)}ds. 
\]
As a consequence, we have
\begin{equation}
  \label{eqn:firstDecomposition}
  \tilde{R}(t) = C_\alpha(1-\alpha) \int_0^{U(t)} Y_0(s) ds + \Delta(t),
\end{equation}
where $\Delta(t) = C_\alpha \int_0^{U(t)} \frac{(1 + Y_0(s))^{1-\alpha}-1 -(1-\alpha)Y_0(s) - (1-\alpha)Y_0(s)^2}{1 + Y_0(s)} ds$. Note that as $Y_0$ is an $\alpha$-stable Lévy process, the following scaling property holds for any $\lambda > 0$:
\begin{equation}
  \label{eqn:scaling}
  \left(Y_0(t),t \geq 0 \right) \egaldistr \left( \lambda^\frac{1}{\alpha} Y_0(t/\lambda), t \geq 0 \right).
\end{equation}

We first prove that $\lim_{t \to 0} \frac{\Delta(t)}{t^{1 + \frac{1}{\alpha}}} = 0$ in probability. There exists $K_\alpha > 0$ such that $|(1 + x)^{1-\alpha}-1 -(1-\alpha)x - (1-\alpha)x^2| \leq K_\alpha x^2$ for any $x \in (0,1)$. Therefore, on the event $\calA_{t,\epsilon}$, for any $s \leq t$, we have
\begin{align*}
  |\Delta(s)| &\leq \int_0^{U(s)} \frac{\left|(1 + Y_0(r))^{1-\alpha}-1 -(1-\alpha)Y_0(r) - (1-\alpha)Y_0(r)^2 \right|}{Y(r)}dr\\
  &\leq \frac{K_\alpha}{1-\epsilon} \int_0^{(1+\epsilon)s} Y_0(r)^2 dr. 
\end{align*}
Using \eqref{eqn:scaling} with $\lambda=t$, for any $\delta > 0$, we have
\begin{align*}
  \P( |\Delta(t)| \geq \delta t^{1 + \frac{1}{\alpha}} ) &\leq \P( \calA_{t,\epsilon}^c) + \P\left( \frac{K_\alpha t^{1 + \frac{2}{\alpha}}}{1-\epsilon} \int_0^{1+\epsilon} Y_0(r)^2 \geq \delta t^{1 + \frac{1}{\alpha}} \right)\\
  &\leq C t \epsilon^{-\alpha} + \P\left( \frac{K_\alpha}{1-\epsilon} \int_0^{1+\epsilon} Y_0(r)^2 \geq \delta t^{-\frac{1}{\alpha}}\right).
\end{align*}
Letting $t \to 0$, we have $\lim_{t \to 0} t^{-1 - \frac{1}{\alpha}} \Delta(t) = 0$ in probability.

We now study the asymptotic behaviour of $t^{-1-\frac{1}{\alpha}}\int_0^{U(t)} Y_0(s)ds$. First observe that for any $\delta,\eta > 0$, we have
\begin{align*}
  &\P\left( \left| \int_t^{U(t)} Y_0(s) ds \right| \geq \eta t^{1+\frac{1}{\alpha}} \right)\\
  \leq &\P\left( |U(t)-t| \geq \delta t \right) + \P\left( \int_{(1-\delta)t}^{(1+\delta)t} |Y_0(s)| ds \geq \eta t^{1+\frac{1}{\alpha}} \right)\\
  \leq &\P\left( \left| \frac{U(t)}{t} - 1 \right| \geq \delta \right) + \P\left( \int_{1-\delta}^{1+\delta} |Y_0(s)| ds \geq \eta \right),
\end{align*}
using \eqref{eqn:scaling}. As $\lim_{t \to 0} \frac{U(t)}{t} = 1$ a.s, letting $t \to 0$ then $\delta \to 0$, we conclude that
\[
  \lim_{t \to 0} \int_t^{U(t)} Y_0(s) ds = 0 \quad \text{in probability}.
\]
Finally, using \eqref{eqn:scaling} again, we have $t^{-1-\frac{1}{\alpha}} \int_0^t Y_0(s) ds \egaldistr \int_0^1 Y_0(s) ds = X$ for any $t >0$.
As a conclusion, \eqref{eqn:firstDecomposition} yields $\displaystyle \lim_{t \to 0} t^{-1-\frac{1}{\alpha}} \tilde{R}_t = (1-\alpha)C_\alpha X$ in law.
\end{proof}

As a straightforward consequence of Theorem \ref{thm:asymptoticR}, we obtain the asymptotic behaviour of $R^{-1}$ at small times.
\begin{corollary}
\label{cor:Rm1}
We have $\lim_{t \to 0} \frac{R^{-1}(t) - \frac{t}{C_\alpha}}{t^{1 + \frac{1}{\alpha}}} = \frac{(\alpha - 1)}{C_\alpha^{1 + \frac{1}{\alpha}}} X$ in law.
\end{corollary}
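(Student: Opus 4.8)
The plan is to invert the asymptotic expansion of $R$ provided by Theorem~\ref{thm:asymptoticR}. Since $R$ is continuous and strictly increasing near $0$ (its integrand $Y(\cdot)^{-\alpha}$ is positive and $U$ is continuous and increasing), we have $R(R^{-1}(t)) = t$ for small $t$. Writing $\tau = t/C_\alpha$ and $s = R^{-1}(t)$, and recalling $\tilde{R}(u) = R(u) - C_\alpha u$, the identities $R(s) = t$ and $R(\tau) = t + \tilde{R}(\tau)$ give the exact relation $R(s) - R(\tau) = -\tilde{R}(\tau)$. The crucial point is that the right-hand side is evaluated at the \emph{deterministic} time $\tau$, so Theorem~\ref{thm:asymptoticR} applies verbatim: $\tilde{R}(\tau)/\tau^{1+\frac{1}{\alpha}} \to (1-\alpha)C_\alpha X$ in law as $t \to 0$.

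First I would turn the left-hand side $R(s) - R(\tau) = C_\alpha\int_{U(\tau)}^{U(s)} Y(u)^{-\alpha}\,du$ into $C_\alpha(s-\tau)$ up to a multiplicative error tending to $1$. On the good event $\mathcal{A}_{t,\epsilon}$ used in the proof of Theorem~\ref{thm:asymptoticR} (enlarging the time horizon by a fixed constant if necessary), the process $Y$ stays within $\epsilon$ of $1$, so $U'(r) = Y(U(r)) \in [1-\epsilon, 1+\epsilon]$ and $Y(u)^{-\alpha} \in [1-C\epsilon, 1+C\epsilon]$ on the relevant range. Consequently $R(s)-R(\tau) = C_\alpha(s-\tau)B_t$ with $B_t$ random but satisfying $|B_t - 1| \le C\epsilon$ on $\mathcal{A}_{t,\epsilon}$. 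Since $\P(\mathcal{A}_{t,\epsilon}^c) \le Ct\epsilon^{-\alpha} \to 0$ for each fixed $\epsilon$, and $\epsilon$ is arbitrary, this yields $B_t \to 1$ in probability.

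Combining the two expressions for $R(s)-R(\tau)$ gives the clean identity
\[
  \frac{R^{-1}(t) - t/C_\alpha}{t^{1+\frac{1}{\alpha}}} = -\frac{1}{B_t}\cdot\frac{\tilde{R}(\tau)}{C_\alpha \tau^{1+\frac{1}{\alpha}}}\cdot\left(\frac{\tau}{t}\right)^{1+\frac{1}{\alpha}},
\]
in which the last factor equals the deterministic constant $C_\alpha^{-(1+\frac{1}{\alpha})}$. Applying Slutsky's lemma to the product of $\tilde{R}(\tau)/\tau^{1+\frac{1}{\alpha}} \to (1-\alpha)C_\alpha X$ (in law) and $1/B_t \to 1$ (in probability) then gives convergence in law to $-\tfrac{1}{C_\alpha}(1-\alpha)C_\alpha X \cdot C_\alpha^{-(1+\frac{1}{\alpha})} = \tfrac{\alpha-1}{C_\alpha^{1+\frac{1}{\alpha}}}X$, which is the claim.

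The main obstacle is exactly the one this argument is designed to sidestep: Theorem~\ref{thm:asymptoticR} furnishes convergence only in law, so one cannot naively substitute the random time $s=R^{-1}(t)$ into the limiting expansion of $\tilde{R}$, since $s$ is strongly correlated with the fluctuation $X$. Anchoring the comparison at the deterministic time $\tau = t/C_\alpha$ and absorbing all the randomness over the short interval between $\tau$ and $s$ into the single factor $B_t \to 1$ is what makes the inversion rigorous. The only routine verifications left are the local bounds on $U'$ and $Y^{-\alpha}$ on $\mathcal{A}_{t,\epsilon}$, which follow precisely as in the proof of Theorem~\ref{thm:asymptoticR}.
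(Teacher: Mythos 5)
Your proof is correct, but it follows a genuinely different route from the paper's. The paper performs the inversion at the level of distribution functions: for each fixed $x$, monotonicity of $R$ gives $\P\bigl(R^{-1}(t)-t/C_\alpha > t^{1+\frac{1}{\alpha}}x\bigr) = \P\bigl(R(\tau_{x,t}) < t\bigr)$ with the deterministic but $x$-dependent time $\tau_{x,t} = t/C_\alpha + t^{1+\frac{1}{\alpha}}x$; a Taylor expansion of $t$ in powers of $\tau_{x,t}$ and a single application of Theorem \ref{thm:asymptoticR} then finish the proof, with no structural input about $R$ beyond the fact that it is nondecreasing. You instead anchor at the single deterministic time $\tau = t/C_\alpha$, derive the pathwise identity $R^{-1}(t)-\tau = -\tilde{R}(\tau)/(C_\alpha B_t)$, and control the local slope ratio $B_t \to 1$ in probability by reusing the good events $\mathcal{A}_{Kt,\epsilon}$, concluding with Slutsky's lemma. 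Each approach buys something: the paper's is softer, treating Theorem \ref{thm:asymptoticR} as a black box (which is why the identical scheme is recycled verbatim for Lemma \ref{lem:estimateRpower}), whereas yours produces an exact coupling between the fluctuations of $R^{-1}$ and those of $R$ at a deterministic time --- stronger in principle, since it yields joint convergence and would adapt naturally to a functional statement, at the price of re-opening the good-event machinery to establish the local regularity of $R$; your remark that the horizon $2t$ of $\mathcal{A}_{t,\epsilon}$ must be enlarged to cover times of order $t/C_\alpha$ (which can exceed $2t$, since $C_\alpha \to 0$ as $\alpha \to 1$) is precisely the right point of care there. If you write this up, make explicit that $R(R^{-1}(t)) = t$ and $B_t > 0$ hold only on an event of probability tending to one (on the good event, $R$ is continuous and strictly increasing on the relevant range), which is harmless for a limit in law, and that Theorem \ref{thm:asymptoticR} is applied along the deterministic times $\tau = t/C_\alpha \to 0$, which is legitimate since its limit is taken as the deterministic time argument tends to $0$.
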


\begin{proof}
Let $x \in \R$ and $t \geq 0$, we observe that
\[
  \P\left( R^{-1}(t) - \frac{t}{C_\alpha} > t^{1 + \frac{1}{\alpha}}x \right) = \P\left( R(\tau_{x,t}) < t \right),
\]
where we set $\tau_{x,t} = \frac{t}{C_\alpha} + t^{1 + \frac{1}{\alpha}} x$. Observe that for any fixed $x \in \R$, we have
\[
  t = C_\alpha \tau_{x,t} - x C_\alpha^{2 + \frac{1}{\alpha}} \tau_{x,t}^{1 + \frac{1}{\alpha}} + o(\tau_{x,t}^{1 + \frac{1}{\alpha}}),
\]
as $t \to 0$. Therefore, by Theorem \ref{thm:asymptoticR}, we obtain
\begin{align*}
  \qquad \qquad \lim_{t \to 0}  \P\left( R^{-1}(t) - \frac{t}{C_\alpha} > t^{1 + \frac{1}{\alpha}}x \right) &= \P\left( (1-\alpha)C_\alpha X < - x C_\alpha^{2 + \frac{1}{\alpha}} \right)\\
  &= \P\left(\frac{(\alpha-1)X}{C_\alpha^{1 + \frac{1}{\alpha}}} > x \right). \qquad \qquad \text{\qedhere}
\end{align*}
\end{proof}

Using this result, we now compute the asymptotic behaviour of $R^{-1}(t)^{-\frac{1}{\alpha - 1}}$, which is used to prove Theorem \ref{thm:main}.
\begin{lemma}
\label{lem:estimateRpower}
We denote by $D_\alpha = \frac{(\alpha \Gamma(\alpha))^{\frac{1}{\alpha(\alpha-1)}}}{(\alpha-1)^\frac{1}{\alpha}}$, we have
\[
  \lim_{t \to 0} t^{\frac{1}{\alpha(\alpha-1)}} \left( \left((\alpha - 1) R^{-1}(t)\right)^{-\frac{1}{\alpha-1}} - \left(\alpha \Gamma(\alpha)/t\right)^\frac{1}{\alpha - 1} \right) = -D_\alpha X \quad \text{in law.}
\]
\end{lemma}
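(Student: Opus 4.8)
The plan is to apply Corollary \ref{cor:Rm1} together with a first-order Taylor expansion of the map $s \mapsto ((\alpha-1)s)^{-\frac{1}{\alpha-1}}$ about the deterministic leading term $t/C_\alpha$. First I would set $W_t = \dfrac{R^{-1}(t) - t/C_\alpha}{t^{1+\frac{1}{\alpha}}}$, so that Corollary \ref{cor:Rm1} reads $W_t \to \frac{\alpha-1}{C_\alpha^{1+\frac{1}{\alpha}}} X$ in law; in particular $(W_t)$ is tight. Writing $R^{-1}(t) = \frac{t}{C_\alpha}\bigl(1 + \xi_t\bigr)$ with $\xi_t = C_\alpha t^{\frac{1}{\alpha}} W_t$, tightness together with $t^{1/\alpha} \to 0$ gives $\xi_t \to 0$ in probability. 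The crucial bookkeeping observation is that the centering term is exactly the leading factor: since $C_\alpha = \alpha(\alpha-1)\Gamma(\alpha)$ one has $(\alpha-1)\frac{t}{C_\alpha} = \frac{t}{\alpha\Gamma(\alpha)}$, hence
\[
  \bigl((\alpha-1)\tfrac{t}{C_\alpha}\bigr)^{-\frac{1}{\alpha-1}} = \bigl(\alpha\Gamma(\alpha)/t\bigr)^{\frac{1}{\alpha-1}}.
\]

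Next I would factor this leading term out and expand. On the event $\{|\xi_t| \le 1/2\}$, whose probability tends to $1$, I write $(1+\xi_t)^{-\frac{1}{\alpha-1}} = 1 - \frac{1}{\alpha-1}\xi_t + \rho(\xi_t)$ with $|\rho(u)| \le K u^2$ for $|u|\le 1/2$, so that
\[
  ((\alpha-1)R^{-1}(t))^{-\frac{1}{\alpha-1}} - (\alpha\Gamma(\alpha)/t)^{\frac{1}{\alpha-1}} = (\alpha\Gamma(\alpha)/t)^{\frac{1}{\alpha-1}}\Bigl(-\tfrac{1}{\alpha-1}\xi_t + \rho(\xi_t)\Bigr).
\]
Multiplying by $t^{\frac{1}{\alpha(\alpha-1)}}$ and substituting $\xi_t = C_\alpha t^{\frac{1}{\alpha}} W_t$, the key point is that the powers of $t$ in the linear term cancel exactly, since $\frac{1}{\alpha(\alpha-1)} - \frac{1}{\alpha-1} + \frac{1}{\alpha} = 0$. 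Thus the linear contribution equals the deterministic constant $-\frac{1}{\alpha-1}(\alpha\Gamma(\alpha))^{\frac{1}{\alpha-1}} C_\alpha$ times $W_t$, with no residual power of $t$.

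Then I would control the remainder. Its prefactor is $t^{\frac{1}{\alpha(\alpha-1)}}(\alpha\Gamma(\alpha)/t)^{\frac{1}{\alpha-1}} = (\alpha\Gamma(\alpha))^{\frac{1}{\alpha-1}} t^{-\frac{1}{\alpha}}$, while $|\rho(\xi_t)| \le K\xi_t^2 = O(t^{\frac{2}{\alpha}} W_t^2)$; the product is $O(t^{\frac{1}{\alpha}} W_t^2)$, which tends to $0$ in probability by tightness of $(W_t)$. Since $W_t \to \frac{\alpha-1}{C_\alpha^{1+\frac{1}{\alpha}}} X$ in law, the continuous mapping theorem gives that the linear contribution converges in law to
\[
  -\tfrac{1}{\alpha-1}(\alpha\Gamma(\alpha))^{\frac{1}{\alpha-1}} C_\alpha \cdot \tfrac{\alpha-1}{C_\alpha^{1+\frac{1}{\alpha}}} X = -(\alpha\Gamma(\alpha))^{\frac{1}{\alpha-1}} C_\alpha^{-\frac{1}{\alpha}} X,
\]
and a short computation using $C_\alpha = \alpha(\alpha-1)\Gamma(\alpha)$ and $\frac{1}{\alpha-1}-\frac{1}{\alpha} = \frac{1}{\alpha(\alpha-1)}$ identifies this constant with $-D_\alpha$. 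An application of Slutsky's theorem, combining the convergent linear term with the remainder vanishing in probability (and absorbing the negligible event $\{|\xi_t|>1/2\}$), concludes. I expect the only real, if mild, obstacle to be the careful passage of the convergence in law through the nonlinear map; this is handled cleanly precisely because the powers of $t$ cancel in the linear term, reducing it to a deterministic multiple of $W_t$, while the quadratic remainder vanishes in probability.
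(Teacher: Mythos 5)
Your proof is correct --- the constants, the exponent cancellation $\frac{1}{\alpha(\alpha-1)} - \frac{1}{\alpha-1} + \frac{1}{\alpha} = 0$, and the identification of the limiting constant with $D_\alpha$ all check out --- but it is organized differently from the paper's. The paper proves the lemma exactly as it proves Corollary \ref{cor:Rm1}: for fixed $x$, it uses the monotonicity of $s \mapsto ((\alpha-1)s)^{-\frac{1}{\alpha-1}}$ to rewrite the event $\left\{ \left((\alpha-1)R^{-1}(t)\right)^{-\frac{1}{\alpha-1}} - \left(\alpha\Gamma(\alpha)/t\right)^{\frac{1}{\alpha-1}} > x t^{-\frac{1}{\alpha(\alpha-1)}} \right\}$ as $\left\{ (\alpha-1)R^{-1}(t) < \left( \left(\alpha\Gamma(\alpha)/t\right)^{\frac{1}{\alpha-1}} + x t^{-\frac{1}{\alpha(\alpha-1)}} \right)^{1-\alpha} \right\}$, Taylor-expands the purely \emph{deterministic} threshold as $\frac{t}{\alpha\Gamma(\alpha)} + \frac{(1-\alpha)x}{(\alpha\Gamma(\alpha))^{\alpha/(\alpha-1)}} t^{1+\frac{1}{\alpha}} + o(t^{1+\frac{1}{\alpha}})$, and then applies Corollary \ref{cor:Rm1} at the level of distribution functions (implicitly using that the law of $X$, a stable random variable, is continuous). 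You instead run a delta-method argument on the random variables themselves: stochastic expansion of $R^{-1}(t)$ around its centering $t/C_\alpha$, linearization of the power map, tightness to kill the quadratic remainder, and Slutsky's theorem. The two proofs rest on the same two ingredients --- Corollary \ref{cor:Rm1} and a first-order expansion of the same map --- so neither is substantially shorter. What the paper's route buys is that every expansion is deterministic, so no tightness, no remainder-in-probability estimate, and no exceptional event are needed; what your route buys is that it never uses monotonicity of the transformation --- only its smoothness --- so it would survive for non-monotone functionals, and it makes the exponent bookkeeping explicit rather than hidden inside the expansion of the threshold.
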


\begin{proof}
The proof follows the same lines as Corollary \ref{cor:Rm1}. For any $x \in \R$, for any $t > 0$ small enough we have
\begin{align*}
  &\P\left( \left((\alpha - 1) R^{-1}(t)\right)^{-\frac{1}{\alpha-1}} - \left(\alpha \Gamma(\alpha)/t\right)^\frac{1}{\alpha - 1} > xt^{-\frac{1}{\alpha(\alpha-1)}} \right)\\
  &= \P\left( (\alpha-1)R^{-1}(t) < \left(  \left(\alpha \Gamma(\alpha)/t\right)^\frac{1}{\alpha - 1} +  xt^{-\frac{1}{\alpha(\alpha-1)}}\right)^{1-\alpha} \right)\\
  &= \P\left( (\alpha-1)R^{-1}(t) < \frac{t}{\alpha \Gamma(\alpha)} + \frac{(1-\alpha)x}{(\alpha \Gamma(\alpha))^{\frac{\alpha}{\alpha-1}}}t^{1+\frac{1}{\alpha}} + o(t^{1+\frac{1}{\alpha}}) \right).
\end{align*}
Therefore, using Corollary \ref{cor:Rm1}, we obtain for any $x \in \R$
\begin{equation*}
  \lim_{t \to 0} \P\left( \left((\alpha - 1) R^{-1}(t)\right)^{-\frac{1}{\alpha-1}} - \left(\alpha \Gamma(\alpha)/t\right)^\frac{1}{\alpha - 1} > xt^{-\frac{1}{\alpha(\alpha-1)}} \right)=\P(D_\alpha X < - x),
\end{equation*}
which concludes the proof.
\end{proof}

\begin{proof}[Proof of Theorem \ref{thm:main}]
By Lemma \ref{lem:coupling}, the asymptotic behaviours of the number of blocs $N(t)$ and $D(R^{-1}(t))$ are the same. Therefore, we only have to prove that
\[
  \lim_{t \to 0} t^\frac{1}{\alpha(\alpha - 1)} \left(D(R^{-1}(t)) - \left(\alpha \Gamma(\alpha)/t\right)^{\frac{1}{\alpha - 1}}\right) = -D_\alpha X \quad \text{in law}.
\]

Observe that by Corollary \ref{cor:Rm1}, we have $\lim_{t \to 0} C_\alpha R^{-1}(t)/t = 1$ in probability. Moreover, as $\alpha \in (1,2)$, we have $\frac{1}{\alpha(\alpha-1)} > \frac{1}{2(\alpha-1)}$, thus 
\[\lim_{\tau \to 0} \frac{D(\tau) - \left((\alpha - 1) \tau\right)^{\frac{-1}{\alpha - 1}}}{\tau^{\frac{-1}{\alpha (\alpha - 1)}}} = 0 \quad \text{ a.s.}\]
by Theorem \ref{thm:poisson}. We conclude that
\[  
  \lim_{t \to 0} t^{\frac{1}{\alpha (\alpha - 1)}}\left(D(R^{-1}(t))- \left((\alpha - 1) R^{-1}(t)\right)^{\frac{-1}{\alpha - 1}}\right)  = 0 \quad \text{in probability.}
\]

Therefore, using Lemma \ref{lem:estimateRpower}, we have
\begin{align*}
  &\lim_{t \to 0} t^\frac{1}{\alpha(\alpha - 1)} \left(D(R^{-1}(t)) - \left(\alpha \Gamma(\alpha)/t\right)^{\frac{1}{\alpha - 1}}\right)\\
  = &\lim_{t \to 0} t^\frac{1}{\alpha(\alpha - 1)} \left( \left((\alpha - 1) R^{-1}(t)\right)^\frac{-1}{\alpha - 1} - \left(\alpha \Gamma(\alpha)/t\right)^{\frac{1}{\alpha - 1}}\right) = -D_\alpha X \quad \text{in law.}
\end{align*}
\end{proof}

\bibliographystyle{plain}

\end{document}